\newtheorem{theorem}{Theorem}[section]
\newtheorem{corollary}[theorem]{Corollary}
\newtheorem{lemma}[theorem]{Lemma}
\newtheorem{problem}[theorem]{Problem}
\newtheorem{proposition}[theorem]{Proposition}
\newtheorem{example}[theorem]{Example}
\theoremstyle{definition}
\newtheorem{remark}[theorem]{Remark}
\def\J#1#2#3{ \left\{ #1,#2,#3 \right\} }
\def\11{\textbf{$1$}}
\begin{document}

\numberwithin{equation}{section}

\title[]{A note on 2-local representations of C$^*$-algebras}

\author[A.M. Peralta]{Antonio M. Peralta}
\address{Departamento de An{\'a}lisis Matem{\'a}tico, Universidad de Granada,\\
Facultad de Ciencias 18071, Granada, Spain}
\email{aperalta@ugr.es}

\thanks{Author partially supported by the Spanish Ministry of Science and Innovation,
D.G.I. project no. MTM2011-23843, and Junta de Andaluc\'{\i}a grant FQM375.}

\subjclass[2011]{Primary 46L05; 46L40} 

\keywords{Local homomorphism, local $^*$-homomorphism; 2-local homomorphism, 2-local $^*$-homomorphism; local representation; 2-local representation}

\date{}
\maketitle

\begin{abstract} We survey the results on linear local and 2-local homomorphisms and zero products preserving operators between C$^*$-algebras, and we incorporate some new precise observations and results to prove that every bounded linear 2-local homomorphism between C$^*$-algebras is a homomorphism. Consequently, every linear 2-local $^*$-homomorphism between C$^*$-algebras is a $^*$-homomorphism.
\end{abstract}

\maketitle
\thispagestyle{empty}

\section{Introduction}\label{sec:intro}

Most of the authors agree in acknowledging the papers of R.V. Kadison \cite{Kad90} and D.R. Larson and A.R. Sourour \cite{LarSou} as the pioneering contributions to the theory of local derivations and local automorphisms on Banach algebras, respectively. We recall that a linear mapping $T$ from a Banach algebra $A$ into a Banach algebra $B$ is said to be a \emph{local homomorphism} if for every $a$ in $A$ there exists a homomorphism  $\Phi_{a} : A \to B$, depending on $a$, satisfying $T(a) = \Phi_a (a)$. When $A$ and $B$ are C$^*$-algebras, and for each $a$ in $A$ there exists a  $^*$-homomorphism $\Phi_{a} : A \to B$, depending on $a$, satisfying $T(a) = \Phi_a (a)$, the mapping $T$ is called a \emph{local $^*$-homomorphism}. \emph{Local automorphisms}, \emph{local $^*$-automorphisms}, and \emph{local derivations} are similarly defined.\smallskip

R.V. Kadison proved in \cite{Kad90} that every bounded local derivation on a von Neumann algebra (i.e. a C$^*$-algebra which is also a dual Banach space) is a derivation. After Kadison's contribution, a multitude of researchers explored the same problem for general C$^*$-algebras (see, for example, \cite{AlBreExVill, Bre92, HadLi04, LiPan, Shu}, and \cite{ZhanPanYang}). The definitive answer is due to B.E. Johnson  \cite{John01}, who proved that every local derivation from a C$^*$-algebra $A$ into a Banach $A$-bimodule is a derivation, even if not assumed a priori to be so. Much more recently, local triple derivations on C$^*$-algebras and JB$^*$-triples have been studied in \cite{Mack, BurFerGarPe2012, BurFerPe2013} and \cite{FerMolPe}.\smallskip

The knowledge about local homomorphisms and local $^*$-homomorphisms between C$^*$-algebras is less conclusive. D.D. Larson and A.R. Sourour proved in \cite{LarSou} that for an infinite dimensional Banach space $X$, every surjective local automorphism $T$ on the Banach algebra $B(X),$  of all bounded linear operators on $X$, is an automorphism. When $X$ is a separable Hilbert space M. Bre\v{s}ar and P. \v{S}emrl showed that the hypothesis concerning the surjectivity is superfluous (cf. \cite{BreSemrl95}). A related result was established by C. Batty and L. Molnar in \cite{BattMol}, where they proved that for a properly infinite von Neumann algebra $\mathcal{M}$, the group, Aut$(\mathcal{M})$, of all $^*$-automorphisms on $\mathcal{M}$ is reflexive, i.e. if a linear mappings $T:\mathcal{M}\to \mathcal{M}$ satisfies that for every $a\in \mathcal{M}$, $T(a)$ belongs to the strong-closure of the set $\{\Phi (a): \phi \in \hbox{Aut}(\mathcal{M})\}$, then $T$ lies in Aut$(\mathcal{M})$. Furthermore, for each Hilbert space $H$ of dimension $n\geq 3$, the group Aut$(B(H))$ is reflexive. In \cite[\S 2]{Pop}, F. Pop provides an example of a local homomorphism from $M_2 (\mathbb{C})$ into $M_4 (\mathbb{C})$ which fails to be multiplicative (cf. Example \ref{example Pop revisited}).\smallskip

In 1997, P. \v{S}emrl \cite{Semrl97} introduces 2-local derivations and 2-local automorphisms in the following sense: Let $A$ be a Banach algebra, a mapping $T : A \to A$ is a \emph{2-local automorphism} if for every $a, b \in  A$ there is an automorphism $T_{a,b} : A \to  A,$
depending on $a$ and $b$, such that $T_{a,b} (a) = T(a)$ and $T_{a,b}(b) = T(b)$ (no linearity, surjectivity or continuity of $T$ is assumed). In the just quoted paper, \v{S}emrl proves that for every infinite-dimensional separable Hilbert space $H$, every 2-local automorphism $T: B(H) \to B(H)$ is an automorphism. In \cite{KimKim05}, S.O. Kim and J.S. Kim show that every surjective 2-local $^*$-automorphism on a prime C$^*$-algebra or on a C$^*$-algebra such that the identity element is properly infinite is a $^*$-automorphism (see \cite{CristLocalAutomorph,Fos2012,Mol2002,Mol2003, Mol2007, MolSemrl} and \cite{KimKim04} for other related results).\smallskip

A closer look at \v{S}emrl's paper \cite{Semrl97} shows that the connections with the Gleason-Kahane-\.{Z}elazko theorem (cf. \cite{Gle,KaZe}) didn't go unnoticed to him. Borrowing a paragraph from \cite[Introduction]{Semrl97}, we notice that Gleason-Kahane-\.{Z}elazko theorem can be reformulated in the following sense: every unital linear local homomorphism from a unital complex Banach algebra $A$ into $\mathbb{C}$ is multiplicative (cf. \cite{Badea1993}). S. Kowalski and Z. Slodkowski \cite{KoSlod} established a 2-local version of the Gleason-Kahane-\.{Z}elazko theorem, showing that every 2-local homomorphism $T:A \to \mathbb{C}$ is linear and multiplicative.\smallskip

In order to keep coherence with the terminology employed by P. \v{S}emrl, a mapping $T$ between C$^*$-algebras $A$ and $B$ is called a \emph{2-local homomorphism} (respectively, \emph{2-local $^*$-homomorphism}) if for every $a,b\in A$ there exists a bounded homomorphism (respectively, a \emph{$^*$-homomorphism}) $\Phi_{a,b}: A\to B$, depending on $a$ and $b$, such that $\Phi_{a,b} (a) = T(a)$ and $\Phi_{a,b}(b) = T(b)$. \emph{2-local Jordan homomorphisms}, \emph{2-local Jordan $^*$-homomorphisms} and \emph{2-local automorphisms} are defined in a similar fashion. We recall that a linear mapping $\Phi: A\to B$ is said to be a Jordan homomorphism whenever $\Phi (a^2) = \Phi (a)^2$ (equivalently, $\Phi$ preserves the Jordan products of the form $a\circ b := \frac12 (a b + ba)$).\smallskip

In 2004, new studies on 2-local linear maps between C$^*$-algebras were developed by D. Hadwin and J. Li \cite{HadLi04} and F. Pop \cite{Pop}, though these papers seem to be mutually disconnected at the publication moment. Hadwin and Li prove that every bounded linear and unital 2-local homomorphism (respectively, 2-local $^*$-homomorphism) from a unital C$^*$-algebra of real rank zero into itself is a homomorphism (respectively, a $^*$-homomorphism) \cite[Theorem 3.6]{HadLi04}. As a consequence, every linear and surjective 2-local $^*$-automorphism on a unital C$^*$-algebra of real rank zero is a $^*$-automorphism (cf. \cite[Theorem 3.7]{HadLi04}). The main contribution of F. Pop in \cite{Pop} establishes that every bounded linear 2-local homomorphism (respectively, 2-local $^*$-homomorphism) from a von Neumann algebra into another C$^*$-algebra is a homomorphism (respectively, a $^*$-homomorphism) \cite[Corollary 3.6]{Pop}.\smallskip

In 2006, J.-H. Liu and N.-C. Wong made their own contribution to the study of not necessarily continuous nor linear 2-local homomorphisms between standard operator algebras on locally convex spaces \cite{LiuWong06}. We recall that a standard operator algebra $\mathcal{A}$ on a locally convex space $X$, is a subalgebra of $B(X)$ containing the algebra $\mathcal{F} (X)$ of all continuous finite rank operators on $X$.  Liu and Wong prove, without assuming linearity, surjectivity or continuity, that every 2-local automorphism of $\mathcal{F} (X)$ is an algebra homomorphism. In
case $X$ is a Frechet space with a Schauder basis and $\mathcal{A}$ contains all locally compact operators, it can be concluded that every 2-local automorphism on $\mathcal{A}$ is an automorphism. Furthermore, a 2-local automorphism $\Theta$ of a standard operator algebra $\mathcal{A}$ on a locally convex space $X$ is an algebra homomorphism provided that the range of $\Theta$ contains $\mathcal{F} (X)$, or $\Theta$ is continuous in the weak operator topology (cf. \cite{LiuWong06}). In the just quoted paper, the authors study the question of when a 2-local automorphism of a C$^*$-algebra is an automorphism, showing that every linear 2-local automorphism $T$ of a C$^*$-algebra whose range is a C$^*$-algebra is an algebra homomorphism.\smallskip

It seems natural to ask whether the above results of Hadwin-Li and Pop remain true for general C$^*$-algebras. This paper, which has an almost expository aim, combined with new research results, we give a positive answer to this question, showing that every bounded linear 2-local homomorphism between C$^*$-algebras is a homomorphism, and consequently, every linear 2-local $^*$-homomorphism between C$^*$-algebras is a $^*$-homomorphism (Theorem \ref{t 2-local $^*$-homomorphisms}). In particular, according to the terminology in \cite{Pop}, every 2-local ($^*$-)representation of a C$^*$-algebra is a ($^*$-)representation (Corollary \ref{c 2-local linear representations}). In Example \ref{example unitarily equivalent matrices} we present a linear 2-local $^*$-automorphism on $M_2 (\mathbb{C})$ which is not multiplicative.  We survey the connections between this problem and the theory of linear zero products preservers developed by J. Alaminos, M. Bresar, J. Extremera and A. Villena in \cite{AlBreExVill}. The novelties in this paper include an independent proof which is not based on the result in \cite{AlBreExVill} together with the precise observations to provide a definitive answer to the whole line of problems on linear preservers on C$^*$-algebras presented above.
Here we make use of techniques developed in the setting of JB$^*$-triples, the use of compact-G$_{\delta}$ projections in the bidual of a C$^*$-algebra, and the study of the connections between (linear) 2-local homomorphisms and zero product preserving mappings. Although the results presented here could have been obtained by combining some of the results that we shall review later, the equivalence between bounded linear 2-local homomorphisms and bounded homomorphisms between C$^*$-algebras has not been explicitly stated before.

\section{Techniques of Jordan algebras and JB$^*$-triples}

Every C$^*$-algebra $A$ admits a Jordan product defined by $a\circ b = \frac12 (a b +b a)$. The Jordan product is commutative but not necessarily associative. Let $B$ be another C$^*$-algebra. A linear map $T: A\to B$ is said to be a \emph{Jordan homomorphism} whenever it preserves Jordan products, or equivalently, when $T(a^2) = T(a)^2$, for every $a$. A \emph{Jordan $^*$-homomorphism} is a {Jordan homomorphism} which maps self-adjoint elements into self-adjoint elements. For each element $a$ in $A$, the symbol $U_a$ will denote the linear map $U_a : A \to A$ defined by $U_a (x) := a x a$. Since, for every $a,x\in A$ we have $U_a (x) = 2 (a\circ x) \circ a - a^2 \circ x$, every Jordan homomorphism $T: A\to B$ satisfies $T (U_a (x)) = U_{T(a)} (T(x))$.\smallskip

Let $T: A \to B$ be a Jordan homomorphism between C$^*$-algebras. Since $A^{**}$ and $B^{**}$ are von Neumann algebras, $T^{**} : A^{**} \to B^{**}$ is weak$^*$ continuous, and the product of every von Neumann algebra is separately weak$^*$ continuous (cf. \cite[Theorem 1.7.8]{Sak}), we deduce, via Goldstine's theorem, that $T^{**} : A^{**}\to B^{**}$ is a Jordan homomorphism. Since the involution of a von Neumann algebra is weak$^*$ continuous (cf. \cite[Theorem 1.7.8]{Sak}), $T^{**}$ is a Jordan $^*$-homomorphism whenever $T$ is a Jordan $^*$-homomorphism.\smallskip

There is some benefit in considering a C$^*$-algebra as an element in the wider class of JB$^*$-triples. A \emph{JB$^*$-triple} is a complex Banach space $E$ equipped with a triple product $\{\cdot,\cdot,\cdot\}:E\times E\times E\rightarrow E$ which is linear and symmetric in the outer variables, conjugate
linear in the middle variable and satisfies the following conditions: \begin{enumerate}[$(a)$]
\item (Jordan identity)
$$\{a,b,\{x,y,z\}\}=\{\{a,b,x\},y,z\}
-\{x,\{b,a,y\},z\}+\{x,y,\{a,b,z\}\},$$ for $a,b,x,y,z$ in $E$;
\item For each $a\in E$, the mapping $L(a,a):E\rightarrow E,$ $x\mapsto \{a,a,x\}$ is an hermitian
(linear) operator with non-negative spectrum; 
\item $\|\{x,x,x\}\|=\|x\|^3$ for all $x\in E$.
\end{enumerate}

Every C$^*$-algebra is a JB$^*$-triple via the triple product given by $$ \J xyz = \frac12 (x y^* z +z y^* x).$$

It was shown by Poincaré in the early 1900s, that the Riemann mapping theorem fails when the complex plane is replaced by a complex Banach space of higher dimension. Although, a complete holomorphic classification of bounded simply connected domains in arbitrary complex Banach spaces is unattainable, {bounded symmetric domains} in finite dimensions were studied and classified by E. Cartan \cite{Cartan35}. In the setting of complex Banach spaces of arbitrary dimension, W. Kaup proved, in \cite{Ka83}, that a complex Banach space is a JB$^*$-triple if, and only if, its open unit ball is a bounded symmetric domain, and every bounded symmetric domain in a complex Banach space is biholomorphically equivalent to the open unit ball of a
JB$^*$-triple; showing that the category of all bounded symmetric domains with base point is equivalent to the category of JB$^*$-triples. We refer to monographs \cite{Up} and \cite{Chu2012} for the basic theory of JB$^*$-triples and JB$^*$-algebras.\smallskip

Spectral resolutions of non-normal elements in a C$^*$-algebra is a completely hopeless goal. However, in every JB$^*$-triple $E$,
the JB$^*$-subtriple $E_a$ generated by a single element $a\in E$ is (isometrically) JB$^*$-isomorphic to $C_0 (L)$ for some locally compact Hausdorff space $L\subseteq (0,\|a\|]$, such that $L\cup \{0\}$ is compact. It is also known that there exists a JB$^*$-triple isomorphism $\Psi_a: E_a\to C_{0}(L),$ satisfying $\Psi (a) (t) = t$ $(t\in L)$ (compare \cite[Lemma 1.14]{Ka83}). In particular, there exists a unique element $a^{[\frac13]}\in E_a$ such that $\{a^{[\frac13]}, a^{[\frac13]}, a^{[\frac13]}\} = a.$ When $E=A$ is a C$^*$-algebra, $$a^{[\frac13]} (a^{[\frac13]})^* a^{[\frac13]}= \{a^{[\frac13]}, a^{[\frac13]}, a^{[\frac13]}\} = a.$$ In order to simplify notation, for each element $a$ in a JB$^*$-triple $E$ we write $a^{[1]} =
a$ and $a^{[2 n +1]} := \J a{a^{[2n-1]}}a$ $(\forall n\in \mathbb{N})$. It is known that JB$^*$-triples
are power associative, that is, $\J{a^{[2 k-1]}}{a^{[2 l-1]}}{a^{[2 m-1]}}=a^{[2(k+l+m)-3]},$ for every $k,l,m\in\mathbb{N}$ (cf. \cite[Lemma 1.2.10]{Chu2012}).

\section{Local and 2-local representations of C$^*$-algebras}

Let $A$ and $B$ be C$^*$-algebras. Clearly, every local $^*$-homomorphism $T:A\to B$ is automatically continuous and contractive. Indeed, since for each $a\in A$, there exists a $^*$-homomorphism $\Phi_{a} : A \to B$ satisfying $T(a) = \Phi_a (a)$, we have $\|T(a) \| = \| \Phi_a (a) \| \leq \|a\|$. Concerning (local) homomorphisms, many basic questions are still open, like automatic continuity of homomorphisms between C$^*$-algebras (\cite[Question 5.4.D]{Dales00} or \cite[Question 1]{Vill}).\smallskip

The next result summarizes some clear facts about local homomorphisms.

\begin{lemma}\label{l composition with homo} Let $A,B$ and $C$ denote C$^*$-algebras, $T: A\to B$ a local homomorphism {\rm (}respectively, a local $^*$-homomorphism{\rm )} and $\Phi: B \to C$ a homomorphism {\rm (}respectively, a $^*$-homomorphism{\rm )}, then $\Phi T$ is a local homomorphism {\rm (}respectively, a local $^*$-homomorphism{\rm )}. Every local $^*$-homomorphism between C$^*$-algebras is positive.$\hfill\Box$
\end{lemma}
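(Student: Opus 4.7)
The plan is to verify both assertions directly from the definitions, as both are essentially bookkeeping with the ``witness'' homomorphisms provided at each point.

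For the first assertion, I would fix an arbitrary $a\in A$ and use the local homomorphism property of $T$ to obtain a homomorphism $\Phi_a:A\to B$ with $T(a)=\Phi_a(a)$. The composition $\Phi\circ\Phi_a:A\to C$ is then a homomorphism, being a composition of homomorphisms, and
\[
(\Phi T)(a) \;=\; \Phi(T(a)) \;=\; \Phi(\Phi_a(a)) \;=\; (\Phi\circ\Phi_a)(a),
\]
so $\Phi T$ is a local homomorphism. The $^*$-version is identical once one observes that the composition of two $^*$-homomorphisms is a $^*$-homomorphism.

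For the second assertion, let $T:A\to B$ be a local $^*$-homomorphism and let $a\in A$ be positive. By hypothesis there exists a $^*$-homomorphism $\Phi_a:A\to B$ with $T(a)=\Phi_a(a)$. Since $^*$-homomorphisms between C$^*$-algebras preserve positivity (they map the continuous functional calculus, or simply send $b^*b$ to $\Phi_a(b)^*\Phi_a(b)$), we obtain $T(a)=\Phi_a(a)\geq 0$ in $B$, which is precisely the statement that $T$ is positive.

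Neither step presents any real obstacle; this is a ``warm-up'' lemma whose content is simply that the defining property of locality passes through both post-composition with a genuine homomorphism and through the functional calculus characterization of positivity. The only point worth flagging, for use later in the paper, is that part two gives automatic continuity and contractivity of every local $^*$-homomorphism, as already noted in the paragraph preceding the lemma; positivity will presumably be combined later with the JB$^*$-triple and bidual techniques recalled in Section~2 to handle the 2-local setting.
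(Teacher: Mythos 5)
Your proof is correct and is exactly the routine verification the paper has in mind: the lemma is stated with $\hfill\Box$ and no written proof, precisely because composing the pointwise witness homomorphism with $\Phi$ and invoking positivity of $^*$-homomorphisms are immediate. (Only a minor aside: the automatic contractivity you mention comes from the norm estimate $\|T(a)\|=\|\Phi_a(a)\|\leq\|a\|$ noted before the lemma, not from positivity, but this does not affect your argument.)
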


By Gelfand theory the set of extreme points in the positive part of the unit ball in the dual, $\mathcal{B}^*$, of a commutative C$^*$-algebra $\mathcal{B}$ is precisely the set $X$ of non-zero multiplicative functionals on $\mathcal{B}$, and thus the identification $\mathcal{B} = C_0(X)$ was established. Therefore, non-zero homomorphisms from $C_0(L)$ into $\mathbb{C}$ identify with those functionals $\delta_{s} : C_0(L) \to \mathbb{C}$, $\delta_s (f) =f(s),$ where $s$ runs in $L.$\smallskip

The refinement of the Gleason-Kahane-\.{Z}elazko theorem established by \.{Z}elazko in \cite{Ze68} asserts that for every complex Banach algebra $\mathcal{B}$ (not necessarily unital nor commutative), every linear selection from the spectrum $\varphi : \mathcal{B} \to \mathbb{C}$ (i.e. $\varphi (a) \in \sigma (a)$, for every $a\in \mathcal{B}$) is multiplicative. Another interesting result, implicitly established by J.P. Kahane and W. \.{Z}elazko in \cite[Theorem 3]{KaZe}, will be applied in the next proposition.\smallskip

\begin{proposition}\label{p local hom between C(K)} Let $L_1$ and $L_2$ be locally compact Hausdorff spaces and let $T: C_0(L_1) \to C_0(L_2)$ be a local homomorphism. Then, for each $s\in L_2$, the mapping $\delta_{s} T: C_0(L_1)\to \mathbb{C}$ is a $^*$-homomorphism. In particular, $T$ is a $^*$-homomorphism.
\end{proposition}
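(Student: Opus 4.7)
My plan is to fix a point $s\in L_2$ and analyse the linear functional $\varphi_s:=\delta_s\circ T\colon C_0(L_1)\to\mathbb{C}$. Since $T$ is a local homomorphism, for each $a\in C_0(L_1)$ there is a homomorphism $\Phi_a\colon C_0(L_1)\to C_0(L_2)$ with $T(a)=\Phi_a(a)$, whence $\varphi_s(a)=(\delta_s\circ\Phi_a)(a)$. The composition $\delta_s\circ\Phi_a$ is a scalar algebra homomorphism on the commutative C$^*$-algebra $C_0(L_1)$, and by Gelfand theory every such functional is either the zero functional or a point evaluation $\delta_t$ for some $t\in L_1$. Consequently
\[
\varphi_s(a)\in\{a(t):t\in L_1\}\cup\{0\},
\]
and this set is contained in the spectrum of $a$ (read, if $L_1$ is compact, in the canonical unitization, whose spectrum always absorbs the value $0$).

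Having realised $\varphi_s$ as a linear selection from the spectrum, I would then invoke the refinement of the Gleason-Kahane-\.{Z}elazko theorem due to \.{Z}elazko that is recorded in the introduction, to conclude that $\varphi_s$ is multiplicative. A multiplicative linear functional on a commutative C$^*$-algebra is either zero or a character, and in both cases it automatically satisfies $\varphi_s(a^*)=\overline{\varphi_s(a)}$. Thus $\delta_s\circ T\colon C_0(L_1)\to\mathbb{C}$ is a $^*$-homomorphism, which gives the first assertion.

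The final conclusion is a routine separation-of-points argument. For every $a,b\in C_0(L_1)$ and every $s\in L_2$ the identities
\[
\delta_s\bigl(T(ab)\bigr)=\varphi_s(a)\varphi_s(b)=\delta_s\bigl(T(a)T(b)\bigr),\qquad \delta_s\bigl(T(a^*)\bigr)=\overline{\varphi_s(a)}=\delta_s\bigl(T(a)^*\bigr)
\]
hold, and since $\{\delta_s\}_{s\in L_2}$ separates the points of $C_0(L_2)$ these force $T(ab)=T(a)T(b)$ and $T(a^*)=T(a)^*$, so that $T$ is a $^*$-homomorphism.

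The step I expect to require the most care is the verification of the spectrum condition $\varphi_s(a)\in\sigma(a)$ when $L_1$ is compact, because a non-unital choice of $\Phi_a$ may produce $\varphi_s(a)=0$ even for an invertible $a$ (so that $0\notin\sigma_{C(L_1)}(a)$). I would absorb this by applying \.{Z}elazko's theorem after passing to the canonical unitization of $C_0(L_1)$, whose spectrum always contains $0$, and then restricting the resulting multiplicative extension back to $C_0(L_1)$.
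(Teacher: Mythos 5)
Your proof is correct and follows essentially the same route as the paper: both arguments reduce to observing that $\delta_s T$ takes each $a$ to a value in $\{a(t):t\in L_1\}\cup\{0\}$ and then invoke a Gleason--Kahane--\.{Z}elazko-type theorem to obtain multiplicativity (the paper cites Theorem~3 of Kahane--\.{Z}elazko directly, while you use \.{Z}elazko's spectrum-selection version via the unitization, which amounts to the same idea and in fact handles the possible value $0$ a bit more explicitly than the paper does).
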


\begin{proof} Let us assume that $\delta_{s} T\neq 0$. We shall prove that $\delta_{s} T = \delta_{t}$ for a unique $t\in L_1$. Since $T$ is a local homomorphism, $\delta_{s} T$ is a local homomorphism. So, given $f\in C_0(L_1)$ there exists a homomorphism $\Phi_{f} : C_0(L_1) \to \mathbb{C}$, and hence an element $t_f\in L_1$, satisfying $\delta_{s} T (f) = \Phi_{f} (f) = f(t_f).$ Now, Theorem 3 in \cite{KaZe} proves that $\delta_{s} T $ is a multiplicative functional.
\end{proof}

\begin{corollary}\label{c abelian codomain} Let $T: A \to B$ be a local homomorphism between C$^*$-algebras, where $B$ is commutative.
Then $T$ is a Jordan $^*$-homomorphism and, consequently, is continuous.
\end{corollary}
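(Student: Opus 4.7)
The plan is to reduce to Proposition \ref{p local hom between C(K)} by composing $T$ with the point evaluations on the commutative codomain. Since $B$ is commutative, Gelfand theory identifies $B = C_0(L_2)$ for some locally compact Hausdorff space $L_2$, and elements of $B$ are separated by the evaluations $\delta_s$, $s\in L_2$. The strategy is to fix $s\in L_2$, analyze the scalar-valued map $\delta_s T\colon A\to\mathbb{C}$, show it is a character and in fact a $^*$-homomorphism, and then glue the pointwise information.

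For the scalar step, Lemma \ref{l composition with homo} gives that $\delta_s T$ is itself a local homomorphism: if $\Phi_a\colon A\to B$ witnesses the local homomorphism property for $a\in A$, then $\delta_s\circ\Phi_a$ is an algebra homomorphism into $\mathbb{C}$ coinciding with $\delta_s T$ at $a$. Invoking Theorem 3 of \cite{KaZe} in the same way as in the proof of Proposition \ref{p local hom between C(K)} yields that $\delta_s T$ is multiplicative, hence a character of $A$. Any character on a C$^*$-algebra preserves involution, since its values on self-adjoint elements lie in the real spectrum and the general case follows by decomposing into real and imaginary parts; thus $\delta_s T$ is a $^*$-homomorphism.

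Gluing is then immediate: for all $a,b\in A$ and all $s\in L_2$,
\[\delta_s(T(ab)) = \delta_s T(a)\,\delta_s T(b) = \delta_s(T(a)T(b)),\]
and $\delta_s(T(a^*)) = \overline{\delta_s T(a)} = \delta_s(T(a)^*)$. Since points separate $C_0(L_2)$, this produces $T(ab)=T(a)T(b)$ and $T(a^*)=T(a)^*$, so $T$ is actually a $^*$-homomorphism and a fortiori a Jordan $^*$-homomorphism. Continuity then follows for free, since $^*$-homomorphisms between C$^*$-algebras are contractive. The one step to flag as the main obstacle is verifying that Theorem 3 of \cite{KaZe} applies to a possibly non-commutative domain $A$; but that result is formulated for arbitrary complex Banach algebras and uses no commutativity, so the argument in Proposition \ref{p local hom between C(K)} transfers verbatim.
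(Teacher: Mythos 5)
Your route is genuinely different from the paper's. The paper fixes a self-adjoint $a\in A$, restricts $T$ to the \emph{commutative} C$^*$-subalgebra $\mathcal{C}$ generated by $a$, and applies Proposition \ref{p local hom between C(K)} to $T|_{\mathcal{C}}:\mathcal{C}\to B$ to get $T(a^2)=T(a)^2$ on $A_{sa}$, hence the Jordan $^*$-homomorphism conclusion; commutativity is only ever needed on the domain side of the Kahane--\.{Z}elazko argument. You instead compose with the characters of the commutative codomain and try to show each $\delta_s T$ is a character of the (possibly non-commutative) algebra $A$. If carried through, your argument proves something strictly stronger, namely that $T$ is genuinely multiplicative -- which is in fact true here (a local homomorphism into a commutative algebra kills commutators, since each witnessing homomorphism $\Phi_{ab-ba}$ does), so the extra strength is not a red flag in itself.

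The gap is exactly at the step you flag, and your resolution of it is wrong. Theorem 3 of \cite{KaZe} is a statement about \emph{commutative} Banach algebras (the paper only ever invokes it with domain $C_0(L_1)$, and the Kahane--\.{Z}elazko paper is explicitly about commutative algebras); it does not ``transfer verbatim'' to non-commutative $A$. What you actually need is \.{Z}elazko's non-commutative refinement \cite{Ze68}, quoted in the paper just before Proposition \ref{p local hom between C(K)}: every linear selection from the spectrum on an arbitrary complex Banach algebra is multiplicative. Even then there is a point to address: the local-homomorphism property only gives $\delta_s T(a)=(\delta_s\Phi_a)(a)$ where $\delta_s\Phi_a$ may be the \emph{zero} homomorphism, so a priori you only know $\delta_s T(a)\in\sigma(a)\cup\{0\}$, which is not a selection from $\sigma_A(a)$ when $A$ is unital and $a$ is invertible. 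This is repaired by passing to the unitization $A^+$ (where $0\in\sigma_{A^+}(a)$ for all $a\in A$ and every character of $A$ extends), but it must be said. With the citation corrected and that case handled, your argument closes and the gluing and involution steps are fine; the paper's restriction-to-commutative-subalgebras argument simply sidesteps the need for any non-commutative Gleason--Kahane--\.{Z}elazko theorem.
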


\begin{proof} 
Let $a$ be a self adjoint element in $A$. Considering the C$^*$-subalgebra, $\mathcal{C},$ generated by $a$, the mapping $T|_{\mathcal{C}}: \mathcal{C}\to B$ is a local homomorphism between commutative C$^*$-algebras. Proposition \ref{p local hom between C(K)} assures that $T|_{\mathcal{C}}$ is a $^*$-homomorphism. Therefore $T(a^2) = T(a)^2$ for every $a\in A_{sa}$, and hence $T$ is a Jordan $^*$-homomorphism.
\end{proof}

When $C(K)$ is replaced with the real C$^*$-algebra $C(K,\mathbb{R})$, of all real-valued continuous functions on $K$, the corresponding versions of the above results are not, in general, true. For example, the linear operator $T: C([0,1],\mathbb{R}) \to \mathbb{R}$, $\displaystyle T(a) :=\int_{0}^{1} a(t) dt$ is not multiplicative. However, the mean-value theorem implies that $T$ is a local homomorphism. \smallskip

The existence of bounded linear operators between C$^*$-algebras which are local homomorphisms and fail to be multiplicative (cf. \cite[example in \S 2]{Pop}) led F. Pop to focus his attention on 2-local homomorphisms (called 2-local representations by Pop). The just-mentioned counter-example, provided by Pop, is not multiplicative but it is a Jordan homomorphism (see Example \ref{example Pop revisited}). The latter property is actually satisfied by every linear 2-local homomorphism between C$^*$-algebras (cf. \cite[Lemma 2.1]{LiuWong06}).\smallskip

\begin{proposition}\label{t 2-local *-homo between C*-algebras} Every linear 2-local homomorphism between C$^*$-algebras is a Jordan homomorphism. Every linear 2-local $^*$-homomorphism between C$^*$-algebras is a Jordan $^*$-homomorphism.
\end{proposition}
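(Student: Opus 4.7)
The plan is to use the 2-local property on very cleverly chosen pairs. The key observation is that although the companion morphism $\Phi_{a,b}$ furnished by the definition only agrees with $T$ on two prescribed points $a$ and $b$, we are free to take $b$ to be whichever element we want to evaluate $T$ at. In particular, we may take $b$ to be a polynomial in $a$.

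For the first assertion, fix an arbitrary $a\in A$ and apply the 2-local property to the pair $(a,a^2)$. This produces a (bounded) homomorphism $\Phi_{a,a^2}:A\to B$ with
\[
\Phi_{a,a^2}(a)=T(a)\qquad\text{and}\qquad \Phi_{a,a^2}(a^2)=T(a^2).
\]
Because $\Phi_{a,a^2}$ is multiplicative, $\Phi_{a,a^2}(a^2)=\Phi_{a,a^2}(a)^2$, so combining these three equalities yields $T(a^2)=T(a)^2$. As $T$ is linear, preservation of squares is equivalent to preservation of the Jordan product, hence $T$ is a Jordan homomorphism.

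For the second assertion, assume now that the companions $\Phi_{a,b}$ may be chosen to be $^*$-homomorphisms. The preceding argument still shows that $T$ is a Jordan homomorphism, so it remains only to verify that $T$ preserves self-adjointness. Given $a=a^*\in A_{sa}$, take $\Phi_{a,a}$ from the 2-local $^*$-property. Since $\Phi_{a,a}$ preserves the involution and $a=a^*$,
\[
T(a)^*=\Phi_{a,a}(a)^*=\Phi_{a,a}(a^*)=\Phi_{a,a}(a)=T(a),
\]
so $T(A_{sa})\subseteq B_{sa}$. Combined with the Jordan homomorphism property, this makes $T$ a Jordan $^*$-homomorphism.

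Honestly, there is no substantive obstacle; the whole proof is a direct evaluation once one sees that the right pairs to feed into the definition are $(a,a^2)$ and $(a,a^*)$ (equivalently $(a,a)$ in the self-adjoint case). Linearity of $T$ is not used to derive $T(a^2)=T(a)^2$ itself; it enters only to translate the quadratic identity into the customary Jordan formulation $T(a\circ b)=T(a)\circ T(b)$ via polarisation. This is also why the statement does not require boundedness of $T$ here, in contrast to the deeper multiplicativity result (Theorem \ref{t 2-local $^*$-homomorphisms}) later in the paper.
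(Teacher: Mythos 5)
Your proof is correct and follows essentially the same route as the paper: the paper also applies the 2-local property to the pair $(a,a^2)$ to obtain $T(a^2)=\Phi_{a,a^2}(a)^2=T(a)^2$ and then invokes linearity. Your explicit verification that $T$ preserves self-adjointness (via $\Phi_{a,a}$ for $a=a^*$) is a detail the paper leaves implicit, and your closing remarks about where linearity and boundedness enter are accurate.
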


\begin{proof} When $T$ is a linear 2-local homomorphism, for each $a\in A$, there exists a homomorphism $\Phi_{a,a^2}: A\to B$ such that $T(a) = \Phi_{a,a^2} (a)$ and $T(a^2) = \Phi_{a,a^2} (a^2)$. Then, $T(a)^2 = \Phi_{a,a^2} (a)^2 =  \Phi_{a,a^2} (a^2) = T(a^2),$  confirming that $T$ is a Jordan homomorphism.
\end{proof}

In the setting of von Neumann algebras, the hypothesis in Proposition \ref{t 2-local *-homo between C*-algebras} can be relaxed. Indeed, in \cite[Proposition 1.4]{Pop}, F. Pop establishes that every bounded linear local homomorphism from a commutative von Neumann algebra into $B(H)$ is multiplicative, and hence a representation. This result applies to get:

\begin{corollary}\label{c Pop Jordan} Let $T: M \to B$ be a bounded linear local homomorphism from a von Neumann algebra into a C$^*$-algebra. Then $T$ is a Jordan homomorphism. Consequently, every linear local $^*$-homomorphism from a von Neumann algebra into a C$^*$-algebra is a Jordan $^*$-homomorphism.
\end{corollary}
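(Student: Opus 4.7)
The plan is to reduce the corollary to Pop's Proposition 1.4, quoted in the paragraph preceding it, which handles bounded linear local homomorphisms from a commutative von Neumann algebra into $B(H)$. The task is therefore to produce, from the given $T:M\to B$, both a commutative source and a $B(H)$ codomain.

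First I would observe that by $\mathbb{C}$-linearity of $T$, proving the Jordan identity $T(a^{2})=T(a)^{2}$ for every $a\in M$ reduces to proving it for every self-adjoint $a$. Writing $a=h+ik$ with $h,k\in M_{sa}$ and expanding $a^{2}$ and $T(a)^{2}$, the identity for $a$ follows from the three relations $T(h^{2})=T(h)^{2}$, $T(k^{2})=T(k)^{2}$, and $T(h\circ k)=T(h)\circ T(k)$; the last in turn is a polarization of $T((h+k)^{2})=T(h+k)^{2}$, so only the self-adjoint case is actually needed.

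Second, fix $x\in M_{sa}$ and let $W^{*}(x)\subseteq M$ be the abelian von Neumann subalgebra generated by $x$. For each $y\in W^{*}(x)$, the witnessing homomorphism $\Phi_{y}:M\to B$ for $T$ restricts to a homomorphism $W^{*}(x)\to B$, so $T|_{W^{*}(x)}:W^{*}(x)\to B$ is still a bounded linear local homomorphism. Now pick a faithful $^{*}$-representation $\iota:B\hookrightarrow B(H)$ (the universal representation does). By Lemma \ref{l composition with homo} the composite $\iota\circ T|_{W^{*}(x)}:W^{*}(x)\to B(H)$ is a bounded linear local homomorphism from a commutative von Neumann algebra into $B(H)$, so Pop's Proposition 1.4 forces it to be multiplicative. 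Hence $\iota(T(x^{2}))=\iota(T(x))^{2}=\iota(T(x)^{2})$, and injectivity of $\iota$ yields $T(x^{2})=T(x)^{2}$. Combined with the first step, $T$ is a Jordan homomorphism.

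For the $^{*}$-assertion, a local $^{*}$-homomorphism automatically preserves self-adjointness, since for $h\in M_{sa}$ the witness $\Phi_{h}$ is a $^{*}$-homomorphism and $T(h)=\Phi_{h}(h)\in B_{sa}$; together with the Jordan property already obtained, this yields a Jordan $^{*}$-homomorphism. The only genuine obstacle is the codomain mismatch with Pop's theorem, which is stated for $B(H)$ rather than for an arbitrary C$^{*}$-algebra; but this is cheaply resolved via Lemma \ref{l composition with homo}, since post-composing a local homomorphism with a faithful $^{*}$-representation preserves the local-homomorphism property.
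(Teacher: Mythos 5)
Your proposal is correct and follows essentially the same route as the paper: reduce to the codomain $B(H)$ via a faithful representation and Lemma \ref{l composition with homo}, restrict to the abelian von Neumann subalgebra generated by a self-adjoint element to invoke Pop's Proposition 1.4, and recover the general case by polarization. The only cosmetic difference is that you perform the polarization reduction at the start rather than at the end.
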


\begin{proof} Let $T: M \to B$ be a bounded linear local homomorphism. Making use of the representation theory and Lemma \ref{l composition with homo}, we can assume that $B=B(H)$ for a suitable complex Hilbert space $H$. Let $a$ be a self-adjoint element in $M$, and let $\mathcal{C}$ denote the von Neumann subalgebra of $M$ generated by $a$ and $1$. Clearly, $T|_{\mathcal{C}} : \mathcal{C}\to B(H)$ is a bounded linear local homomorphism. By  \cite[Proposition 1.4]{Pop}, $T|_{\mathcal{C}}$ is multiplicative. Therefore, $T(a^2) = T(a)^2,$ for every $a\in M_{sa}$. This implies that $T(a\circ b) = T(a) \circ T(b)$ for every $a,b\in M_{sa}$ and hence $T((a+ib)^2) = T (a^2 - b^2 + 2 i a\circ b) =  T (a)^2 - T(b)^2 + 2 i T(a)\circ T(b) = T(a+ib)^2$, for every $a,b\in M_{sa}$, which proves the statement.
\end{proof}

It seems natural to ask whether the above mentioned results of Hadwin-Li and Pop hold when the domain is a general C$^*$-algebra. We shall see that the answer is intrinsically related to zero-products preserving operators between C$^*$-algebras.\smallskip

Let $A$ and $B$ be C$^*$-algebras. A mapping $f: A\to B$ is said to be \emph{orthogonality preserving} on a subset $U\subseteq A$ when the implication $$a\perp b  \Rightarrow f(a)\perp f(b),$$ holds for every $a,b\in U$. We recall that elements $a, b$ in $A$ are said to be
\emph{orthogonal} (denoted by $a \perp b$) whenever $a b^* = b^* a=0$. When the implication $$a b =0 \Rightarrow f(a) f(b) =0$$ holds for every $a,b\in U$, we shall say that $f$ \emph{preserves zero products} or is \emph{zero products preserving} on $U$.
In the case $A=U$, we shall simply say that $f$ is \emph{orthogonality preserving} or that $f$ \emph{preserves zero products}, respectively. Every homomorphism between C$^*$-algebras preserves zero products and every $^*$-homomorphism is orthogonality preserving.\smallskip

\begin{lemma}\label{l local 2-homo are OP}(cf. \cite[Lemma 2.1]{LiuWong06}) Let $T: A\to B$ be a map between C$^*$-algebras. Suppose $T$ is a 2-local $^*$-homomorphism (respectively, a 2-local homomorphism), then $T$ is orthogonality preserving (respectively, zero products preserving).
\end{lemma}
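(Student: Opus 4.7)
The plan is to apply the definition of 2-locality directly at each pair of orthogonal (or zero-product) elements, and then transport the relation through the honest ($^*$-)homomorphism that the 2-local hypothesis supplies. No further machinery is needed; in particular, linearity, continuity, and surjectivity of $T$ play no role in the argument.

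First, for the zero-products case, I would take $a,b \in A$ with $ab = 0$ and invoke the hypothesis to obtain a homomorphism $\Phi_{a,b}: A \to B$ with $\Phi_{a,b}(a) = T(a)$ and $\Phi_{a,b}(b) = T(b)$. Multiplicativity of $\Phi_{a,b}$ then gives
$$T(a)T(b) = \Phi_{a,b}(a)\,\Phi_{a,b}(b) = \Phi_{a,b}(ab) = 0,$$
so $T$ preserves zero products. For the orthogonality-preserving assertion, I would start from $a,b \in A$ with $ab^* = b^*a = 0$ and pick a $^*$-homomorphism $\Phi_{a,b}$ agreeing with $T$ at both $a$ and $b$. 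Because $\Phi_{a,b}$ respects the involution, $\Phi_{a,b}(b^*) = \Phi_{a,b}(b)^* = T(b)^*$, and hence
$$T(a)T(b)^* = \Phi_{a,b}(ab^*) = 0, \qquad T(b)^* T(a) = \Phi_{a,b}(b^*a) = 0,$$
which is precisely $T(a) \perp T(b)$.

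There is no genuine obstacle in this proof; it is essentially a tautological consequence of the definitions. The only subtle point worth isolating is that preservation of \emph{orthogonality} (not merely of zero products) requires the witnessing map to be a $^*$-homomorphism, since the identity $\Phi_{a,b}(b^*) = T(b)^*$ is what lets us conclude $T(a)T(b)^* = 0$ from $ab^* = 0$. This is exactly why the two halves of the statement split along the $^*$-homomorphism versus plain homomorphism distinction.
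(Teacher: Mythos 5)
Your proposal is correct and follows exactly the paper's argument: fix the pair $a,b$, invoke the witnessing ($^*$-)homomorphism $\Phi_{a,b}$ agreeing with $T$ at $a$ and $b$, and transport the zero-product or orthogonality relation through it. The paper's proof is a two-line version of the same reasoning; your added remark about why the involution-preservation of $\Phi_{a,b}$ is what makes the orthogonality case work is accurate but not a different method.
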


\begin{proof} Given $a,b\in A$ with $a\perp b$, we take a $^*$-homomorphism $\Phi_{a,b} : A\to B$ satisfying $T(a) = \Phi_{a,b} (a)$ and $T(b) = \Phi_{a,b} (b)$. Clearly, $T(a) = \Phi_{a,b} (a)\perp  \Phi_{a,b} (b)= T(b)$. The other statement follows similarly.
\end{proof}

Orthogonality preserving bounded linear maps between C$^*$-algebras have been completely described in \cite[Theorem 17]{BurFerGarMarPe} (see \cite{BurFerGarPe} and \cite{GarPePuRa2013} for completeness). 
\smallskip

Let $A$ be a C$^*$-algebra. An element $x$ in the von Neumann algebra $A^{**}$ is a \emph{multiplier} for $A$ if $x A \subseteq A$ and $A x \subseteq A$. The symbol $M(A)$ will denote the set of all multiplier of $A$ in $A^{**}.$ It is known that $M(A)$ is a unital C$^*$-subalgebra of $A^{**}$. Multipliers are uninteresting if the algebra $A$ possesses a unit, because in such a case $M(A) =A$ (see \cite[\S 3.12]{Ped} for more details).\smallskip


Let $W$ be a von Neumann algebra. For each normal positive functional $\varphi\in W_*$ the mapping $W\times W \to \mathbb{C}$, $(x,y)_{\varphi} := \frac12 \varphi (x y^* + y^* x)$ defines a semi-positive sesquilinear form on $W$. The corresponding prehilbertian seminorm on $W$ is defined by $$\|x\|_{\varphi} := (x,x)_{\varphi}^{\frac12} = \left(\frac12 \varphi (x x^* + x^* x) \right)^{\frac12}.$$ The \emph{strong$^*$ topology} of $W$ (denoted by $s^*(W,W_*)$) is the locally convex topology on $W$ defined by all the seminorms $\|.\|_{\varphi}$, where $\varphi$ runs in the set of all positive functionals in $W_*$ (cf. \cite[Definition 1.8.7]{Sak}). It is known that the strong* topology of $W$ is compatible with the duality $(W,W_{*})$, that is a functional $\psi: W \to \mathbb{C}$ is $s^*(W,W_*)$ if and only if it is weak$^*$ continuous (see \cite[Corollary 1.8.10]{Sak}). It is also known, from the above fact together with the Grothedieck-Pisier-Haagerup inequality (cf. \cite{Haa}), that a linear map between von Neumann algebras is strong$^*$ continuous if and only if it is weak$^*$ continuous. We also recall that the product of every von Neumann algebra is jointly strong$^*$ continuous on bounded sets (see \cite[Proposition 1.8.12]{Sak}).\smallskip

The next result is a subtle variant of \cite[Lemma 2.2]{Wong07}. The proof applies techniques of JB$^*$-triples in a similar fashion to the arguments given in the proofs of \cite[Proposition 3.1]{BurFerGarPe}, \cite[Proposition 1.3]{GarPe12}, and \cite[Lemma 2.2]{Wong07}.

\begin{proposition}\label{p wong 2.2 refined} Let $T : A \to  B$ be a bounded linear map between C$^*$-algebras sending zero products in $A$ to zero products in $B$. Then the restricted map $T^{**}|_{M(A)}: M(A)\to B^{**}$ sends zero products in $M(A)$ to zero products in $B^{**}$.
\end{proposition}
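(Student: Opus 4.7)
The plan is to reduce the multiplier statement to the hypothesis on $A$ by sandwiching each $x\in M(A)$ between factors drawn from a bounded approximate identity of $A$, and then to pass to the strong$^{*}$ limit in the biduals. This sidesteps the JB$^{*}$-triple cube-root machinery used in the cited models, though the continuity principle invoked at the end is the same.

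Fix $x,y\in M(A)$ with $xy=0$. Because $A$ is a two-sided ideal of $M(A)$, for every $a,b\in A$ both $ax$ and $yb$ lie in $A$, and the trivial identity
\[
(ax)(yb)=a(xy)b=0
\]
together with the hypothesis on $T$ gives $T(ax)\,T(yb)=0$ in $B$ for all $a,b\in A$. Now choose a bounded approximate identity $(e_{\lambda})$ of $A$ and specialise $a=e_{\lambda}$, $b=e_{\mu}$. The self-adjoint inequality $(1-e_{\lambda})^{2}\le 1-e_{\lambda}$ in $A^{**}$, combined with $\varphi(e_{\lambda})\to\varphi(1)$ for every positive normal $\varphi\in(A^{**})_{*}$, yields $\|1-e_{\lambda}\|_{\varphi}\to 0$; hence $e_{\lambda}\to 1_{A^{**}}$ strongly$^{*}$. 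Joint strong$^{*}$-continuity of multiplication on bounded subsets of $A^{**}$ then gives $e_{\lambda}x\to x$ and $ye_{\mu}\to y$ strongly$^{*}$, with both nets norm-bounded by $\|x\|$ and $\|y\|$ respectively.

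Finally, $T^{**}$ is weak$^{*}$-continuous and hence, by the equivalence recalled before the proposition, strong$^{*}$-continuous; since $T^{**}|_{A}=T$, this gives $T(e_{\lambda}x)\to T^{**}(x)$ and $T(ye_{\mu})\to T^{**}(y)$ strongly$^{*}$ in $B^{**}$, again with bounded nets. Joint strong$^{*}$-continuity of multiplication on bounded sets of $B^{**}$ (or, equivalently, two successive applications of separate strong$^{*}$-continuity) converts each vanishing product $T(e_{\lambda}x)T(ye_{\mu})=0$ into $T^{**}(x)T^{**}(y)=0$, proving the proposition. The only point that requires attention is to keep every convergence argument inside a norm-bounded set, so that joint strong$^{*}$-continuity of the product genuinely applies; this is automatic here because approximate identities and $T$ itself are norm-bounded, but it is the step where a naive version of the argument could fail.
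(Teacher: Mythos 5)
Your proof is correct, and it takes a genuinely different route from the paper's. The paper reduces to $A$ via JB$^*$-triple functional calculus: it extracts the cube roots $a^{[\frac13]}, b^{[\frac13]}\in M(A)$, shows $a^{[\frac13]}b^{[\frac13]}=0$ by power associativity, and then replaces the middle factor of $a=a^{[\frac13]}(a^{[\frac13]})^{*}a^{[\frac13]}$ by a Goldstine net from $A$, so that the resulting sandwiches lie in $A$, have zero products, and converge weak$^*$ to $a$ and $b$; only the separate weak$^*$ continuity of the product is then needed. You instead exploit that $A$ is a two-sided ideal in $M(A)$ and sandwich with an approximate identity, which is more elementary (no triple machinery at all) at the cost of invoking the strong$^*$/weak$^*$ equivalence for $T^{**}$ and joint strong$^*$ continuity on bounded sets --- though even that is avoidable: since $z\mapsto zx$ and $z\mapsto T^{**}(x)z$ are weak$^*$ continuous, two successive weak$^*$ limits, first in $\lambda$ and then in $\mu$, already give $T^{**}(x)T^{**}(y)=0$, exactly as in the paper's final step. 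Two small points to tighten: you should take $(e_{\lambda})$ to be a positive increasing approximate identity with $\|e_{\lambda}\|\leq 1$, since both $(1-e_{\lambda})^{2}\leq 1-e_{\lambda}$ and $\varphi(e_{\lambda})\to\varphi(1)$ rely on $0\leq e_{\lambda}\leq 1$ and monotonicity, and the norm bound $\|e_{\lambda}x\|\leq\|x\|$ likewise uses $\|e_{\lambda}\|\leq 1$. Neither is a gap, as such an approximate identity always exists.
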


\begin{proof} We fix $a,b\in M(A)$ with $a b=0$. For each natural, $n$, the odd triple power $a^{[3]} = a a^* a$, $a^{[2n +1]} = a (a^{[2n-1]})^* a$, satisfies that $a^{[2n-1]} b= 0$. Thus, we deduce that, $\alpha b= 0$, for every $\alpha$ in the JB$^*$-subtriple, $M(A)_a,$ of $M(A)$ generated by $a$. The same argument shows that \begin{equation}\label{eq 3 prop multipliers}\alpha \beta  = 0
\end{equation} for every $\alpha\in M(A)_{a}$ and $\beta\in M(A)_{b}$. Consequently, we have $a^{[\frac13]} b^{[\frac13]}=0.$\smallskip

Since $M(A)$ is a C$^*$-subalgebra of $A^{**}$, by Goldstine's Theorem, we can find bounded nets $(x_{\lambda})$ and $(y_{\mu})$ in $A$, converging in the weak$^*$ topology of $A^{**}$ to $a^{[\frac13]}$ and $b^{[\frac13]}$, respectively. The nets $\left(a^{[\frac13]} x_{\lambda}^* a^{[\frac13]}\right)$ and $ \left( b^{[\frac13]} y_{\mu}^* b^{[\frac13]}\right) $ lie in $A$, and $$\left(a^{[\frac13]} x_{\lambda}^* a^{[\frac13]}\right) \left( b^{[\frac13]} y_{\mu}^* b^{[\frac13]}\right) =0,$$ for every $\lambda$ and $\mu$.\smallskip

By hypothesis, $T$ is zero products preserving, and hence, \begin{equation}\label{p 1 equ 2} T\left(a^{[\frac13]} x_{\lambda}^* a^{[\frac13]}\right) T \left( b^{[\frac13]} y_{\mu}^* b^{[\frac13]}\right) =0,
\end{equation} for every $\lambda$ and $\mu$. Finally, taking weak$^*$-limits in $\lambda$ and $\mu$, the weak$^*$ continuity of $T^{**}$ and the separate weak$^*$-continuity of the product of $A^{**}$, together with \eqref{p 1 equ 2}, give $$0 = T^{**} \left(a^{[\frac13]} (a^{[\frac13]})^* a^{[\frac13]}\right) T^{**} \left( b^{[\frac13]} (b^{[\frac13]})^* b^{[\frac13]}\right) = T^{**} (a) T^{**} (b),$$ which completes the proof.
\end{proof}

Let $A$ be a C$^*$-algebra, a projection $p$ in $A^{**}$ is called \emph{compact-$G_{\delta}$} (relative to $A$) whenever there exists a positive, norm-one element $a$ in $A$ such that $p$ coincides with the weak$^*$-limit (in $A^{**}$) of the sequence $(a^{n})_n$. Following standard notation, we shall say that $p$ is a \emph{range projection} when there exists a positive, norm-one element  $a\in A$ for which $p$ is the weak$^*$-limit of the sequence $(a^{\frac1n})_n$.\smallskip

Our next result can be derived from \cite[Theorem 4.1]{AlBreExVill} (compare Remark \ref{remark Bresar}). To our knowledge, it has never been stated in the form presented here. We also include a new proof which is independent from the arguments in \cite{AlBreExVill}.

\begin{theorem}\label{t Jordan + 2-local between unital} Let $A$ and $B$ be C$^*$-algebras with $A$ unital. Let $J :A \to B$ be a bounded Jordan homomorphism preserving zero products. Then $J$ is a homomorphism.
\end{theorem}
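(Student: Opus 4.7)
The strategy is to show that the bounded bilinear defect $\psi\colon A\times A\to B$, $\psi(a,b):=J(ab)-J(a)J(b)$, vanishes identically; this is equivalent to $J$ being an associative homomorphism.

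Collecting the algebraic consequences of the hypotheses, set $p:=J(1)$. From $J(1^2)=J(1)^2$ one has $p^2=p$. The Jordan identity $J(a^2)=J(a)^2$ implies the standard relations
\[
J(aba)=J(a)J(b)J(a),\qquad J(abc+cba)=J(a)J(b)J(c)+J(c)J(b)J(a),
\]
valid for all $a,b,c\in A$ (the second by polarising the first). Setting $b=1$ in the first identity and combining with $J(a\circ 1)=p\circ J(a)=J(a)$ gives $pJ(a)=J(a)p=J(a)$, so $p$ acts as a two-sided unit on $J(A)$. Consequently $\psi$ is bounded and bilinear, antisymmetric (from $J(ab+ba)=J(a)J(b)+J(b)J(a)$), satisfies $\psi(1,b)=\psi(a,1)=0$, and vanishes on zero-product pairs (since then both $J(ab)=0$ and $J(a)J(b)=0$). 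I then pass to the bidual: $J^{**}\colon A^{**}\to B^{**}$ is weak$^*$-continuous and Jordan. Adapting the cube-root argument of Proposition \ref{p wong 2.2 refined} from $M(A)$ to all of $A^{**}$, using JB$^*$-triple cube roots in $A^{**}$, Kaplansky's density theorem, and the joint strong$^*$ continuity of multiplication on bounded sets, one verifies that $J^{**}$ still preserves zero products, so $\psi^{**}$ inherits all the listed properties on $A^{**}\times A^{**}$ and it suffices to prove $\psi^{**}\equiv 0$.

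In $A^{**}$ every self-adjoint element is a norm-limit of finite real-linear combinations of pairwise orthogonal projections, and for any two orthogonal projections $e,f\in A^{**}$ one has $ef=0$ and hence $\psi^{**}(e,f)=0$. A direct bilinear expansion then gives $\psi^{**}(a,b)=0$ whenever $a$ and $b$ are commuting self-adjoint elements of $A^{**}$ (spectrally decompose them jointly in a common abelian von Neumann subalgebra, so that all cross-products of the spectral projections are projections or zero). The remaining non-commuting case is handled by combining the polarised Jordan identity with the orthogonality $e(1-e)=0$ for each projection $e\in A^{**}$: zero-product preservation produces Peirce-type vanishing statements like $\psi^{**}(eae,(1-e)b(1-e))=0$ together with their analogues, and summing over a refining family of spectral projections of $a$ and of $b$ while passing to the weak$^*$-limit (using weak$^*$-continuity of $J^{**}$ and separate weak$^*$-continuity of multiplication) forces $\psi^{**}(a,b)=0$ for all $a,b\in A^{**}$.

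The main obstacle is precisely this last step: the spectral projections of $a$ and of $b$ need not commute in $A^{**}$, so one cannot simply reduce to a single maximal abelian subalgebra. The technical crux is therefore the interplay between the polarised Jordan identity $J(abc+cba)=J(a)J(b)J(c)+J(c)J(b)J(a)$ and the orthogonality $e(1-e)=0$ available in any von Neumann algebra, which together rewrite a general defect $\psi^{**}(a,b)$ as a weak$^*$-limit of finite sums of zero-product contributions, each of which vanishes by the zero-product-preserving hypothesis.
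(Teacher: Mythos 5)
Your setup (the defect $\psi$, the polarized Jordan identities, $J(1)$ acting as a unit on the range, antisymmetry, and vanishing of $\psi$ on zero-product pairs) is correct, but the proof has two genuine gaps, and the second is exactly the point where the real work lies. First, the claim that $J^{**}$ preserves zero products on all of $A^{**}$ is not justified by the tools you cite. The cube-root argument of Proposition \ref{p wong 2.2 refined} works only because $a^{[\frac13]}x_\lambda^* a^{[\frac13]}$ lands back in $A$ when $a\in M(A)$, so the hypothesis on $J$ (which is only assumed on $A$) can be invoked; for general $a\in A^{**}$ these elements leave $A$ and the argument becomes circular. Kaplansky density plus joint strong$^*$ continuity only gives nets with $x_\lambda y_\mu\to 0$, not $x_\lambda y_\mu=0$, and a zero-products-preserving hypothesis says nothing about approximate zero products. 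Indeed the paper deliberately stops at $M(A)$ and, since $A$ is unital here so that $M(A)=A$, never claims anything about zero products of general elements of $A^{**}$.

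Second, and more seriously, your final step for non-commuting $a,b$ is a statement of intent rather than an argument. Decomposing \emph{both} variables into Peirce components relative to spectral projections produces off-diagonal terms such as $\psi^{**}(eae,\,e b(1-e))$ whose arguments are not zero-product pairs, and the off-diagonal components $ea(1-e)$ are not self-adjoint, so the spectral refinement cannot be iterated on them; you acknowledge this is the ``main obstacle'' but do not resolve it. The paper's proof avoids the issue by being asymmetric: from $ab=0$ it derives $J(bza)=J(bz)J(a)=J(b)J(za)$ for \emph{arbitrary} $z$ (insert $z$ between the orthogonal factors before applying the hypothesis), then, for a compact-$G_\delta$ projection $p=w^*\mbox{-}\lim_n a^n$, builds explicit sequences $(y_n),(z_n)$ inside $A$ with $y_nz_m=0$ converging to $1-p$ and $p$, and extracts $J^{**}(pz)=J^{**}(p)J^{**}(z)$ for every $z\in A^{**}$. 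Only the \emph{left} factor is ever decomposed spectrally (into compact-$G_\delta$ projections of a single self-adjoint element), so the question of commuting the spectral resolutions of $a$ and $b$ never arises. To repair your proof you would need to either reproduce this one-sided identity with an arbitrary $z$ in the middle, or supply an actual mechanism for the off-diagonal Peirce contributions; as written, the conclusion $\psi^{**}\equiv 0$ is not reached.
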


\begin{proof} Since $J$ is a Jordan homomorphism, we deduce that $J(1)=e$ is an idempotent in $B$ and $$J(a) = J (U_1 (a) )= U_{J(1)} (J(a)) = U_{e} (J(a))= e J(a) e,$$ for every $a\in A$. Since $J^{**}: A^{**} \to B^{**}$ is a Jordan homomorphism too, we can actually assure that \begin{equation}\label{eq 00 t Jordan + 2 zpp} J^{**} (a) = e J^{**}(a) e = e J^{**}(a) = J^{**}(a) e,
\end{equation} for every $a\in A$.\smallskip

Since $J$ preserves zero products, given $a,b\in A$ with $a b=0$, we have $$J (ba) = J ( ab + ba ) = J(a) J(b) + J(b) J(a) = J(b) J(a), $$ and consequently
\begin{equation}\label{eq 0 t Jordan + 2 zpp} J (bza) =  J(bz) J(a) = J (b) J(za),
\end{equation} for every $a,b,z\in A$ with $ab=0$.\smallskip

Let us consider a compact-$G_{\delta}$ projection $p\in A^{**}$, that is, there exists a positive, norm-one element $a$ in $A$ such that $\displaystyle p=w^*-\lim_{n} a^{n}$. We can identify the C$^*$-subalgebra of $A$ generated by $1$ and $a$ with $C(K),$ where $K\subseteq [0,1]$, $1\in K$, and $a(t)= t$ in the corresponding identification. Let us define two sequences $(y_n)$ and $(z_n)$ in $C(K)$ given by
$$y_{n} (t):=\left\{%
\begin{array}{ll}
    1, & \hbox{if $t\in K\cap [0,1-\frac{1}{n}]$;} \\
    -2 n t +2 n -1, & \hbox{if $t\in K\cap [1-\frac1{n},1-\frac1{2n} ]$;} \\
    0, & \hbox{if $t\in K\cap [1-\frac1{2n},1 ]$,} \\
\end{array}%
\right.$$ and $$z_{n} (t):=\left\{%
\begin{array}{ll}
    0, & \hbox{if $t\in K\cap [0, 1-\frac1{3n}]$;} \\
    3 nt -3 n +1, & \hbox{if $t\in K\cap [1-\frac1{3n},1 ]$} \\
\end{array}%
\right. .$$ It is easy to check that $0\leq y_n, z_n$, $(y_n)$ is increasing, $(z_n)$ is decreasing, $y_n z_{m}= z_{m} y_n =0$ for every $n,m\in \mathbb{N}$, $m\geq n$, $w^*-\lim_{n} y_n = 1-p$, and $w^*-\lim_{n}  z_n =p$ in $A^{**}$.\smallskip

By hypothesis, for every $z$ in $A$, and every $n,m$ in $\mathbb{N}$ with $m\geq n$, we have $J(z z_m) J(y_n)=0$, and thus $$0 = w^*-\lim_{m\geq n} J(z z_m) J(y_n) = J^{**}(z p) J (y_n),\ \hbox{ for every } n\in \mathbb{N},$$ which implies that $$0 = w^*-\lim_{n} J^{**}(z p) J (y_n) =  J^{**}(z p) J (1-p),$$ and hence $$J^{**}(z p) e = J^{**}(z p) J(1) = J^{**}(z p) J (p).$$ It follows from \eqref{eq 00 t Jordan + 2 zpp} that \begin{equation}\label{eq 1 t Jordan + 2 zpp} J^{**} (zp) =  J^{**}(z p ) e = J^{**} (zp) J^{**}(p),
\end{equation} for every $z\in A^{**}$.\smallskip

Applying \eqref{eq 0 t Jordan + 2 zpp} we deduce that $$J (y_n) J(z z_m)= J (y_n z z_m),$$ and $$J (z_m) J(z y_n)= J (z_m z y_n),$$ for every $z\in A$, $n,m\in \mathbb{N}$ with $m\geq n$. Taking weak$^*$-limits in $m,n\to \infty$, we get \begin{equation}\label{eq 3.6.0} J^{**} (1-p) J^{**} (z p ) = J^{**} ((1-p)z p),
\end{equation} and \begin{equation}\label{eq 2 t Jordan + 2 zpp}J^{**} (p) J^{**} (z (1-p) ) = J^{**} (p z (1- p)),\end{equation} for every $z$ in $A$ or in $A^{**}.$ Combining \eqref{eq 3.6.0} with \eqref{eq 00 t Jordan + 2 zpp} we get $$J^{**} (z p ) - J^{**} (p) J^{**} (z p )= J^{**} (1) J^{**} (z p ) - J^{**} (p) J^{**} (z p ) = J^{**} (z p) - J^{**} (pz p),$$ and thus \begin{equation}\label{eq 3 t Jordan + 2 zpp} J^{**} (p) J^{**} (zp) =  J^{**}(p z p ),
\end{equation} for every $z$ in $A$ or in $A^{**}.$\smallskip

Now, combining \eqref{eq 2 t Jordan + 2 zpp} and \eqref{eq 3 t Jordan + 2 zpp}, we deduce that $$J^{**} (p) J^{**} (z) = J^{**} (p z) ,$$ for every $z\in A^{**}.$ We have therefore proved that \begin{equation}\label{eq 4 t Jordan + 2 zpp} J^{**} (p z ) =  J^{**}(p) J^{**} (z),
\end{equation} for every $z\in A^{**}$ and every compact-$G_{\delta}$ projection $p\in A^{**}$.\smallskip

Finally, take an arbitrary self adjoint element $b\in A$ and identify the C$^*$-subalgebra of $A$ generated by $1$ and $b$ with a $C(K)$-space for a suitable $K\subset [-\|b\|,\|b\|]$. The property proved in \eqref{eq 4 t Jordan + 2 zpp} shows that $$J^{**} (p z ) =  J^{**}(p) J^{**} (z)$$ for every projection $p$ of the form $p =\chi_{_{[\alpha,\beta]\cap K}}$ with $[\alpha,\beta]\subseteq [-\|b\|,\|b\|]$. Having in mind that projections $q\in C(K)^{**} \subseteq A^{**}$ of the form $q =\chi_{_{(\alpha,\beta)\cap K}},$ with $(\alpha,\beta)\subseteq K$ can be approximated in the strong$^*$ topology of $A^{**}$ by sequences of projections $\displaystyle (p_n) =\left( \chi_{_{[\alpha -\frac1n ,\beta+\frac1n]\cap K}}\right) $, we deduce that $J^{**} (q z ) =  J^{**}(q) J^{**} (z)$ for every such projection $q$ and every $z\in A^{**}$. It is well known that $b$ (regarded as an element in $C(K)\subseteq A$) can be approximated in norm by a finite linear combinations of mutually orthogonal projections of the form $\chi_{_{[\alpha,\beta]\cap K}}$ and $\chi_{_{(\alpha,\beta)\cap K}}$ with $[\alpha,\beta]\subseteq [-\|b\|,\|b\|]$ (i.e. steps functions). Therefore, $J (b) J(z) = J (b z)$, for every $z,b\in A$ with $b=b^*$ and, by linearity, $J$ is a homomorphism.
\end{proof}

We can now prove the main result concerning 2-local homomorphisms.

\begin{theorem}\label{t 2-local $^*$-homomorphisms} Every bounded linear 2-local homomorphism between C$^*$-algebras is a homomorphism. Every linear 2-local $^*$-homomorphism between C$^*$-algebras is a $^*$-homomorphism.
\end{theorem}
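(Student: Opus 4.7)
The plan is to assemble the four preparatory results of this section into a single extension-by-bidual argument. Let $T : A \to B$ be a bounded linear 2-local homomorphism. The main technical obstacle I anticipate is that $A$ need not be unital, which blocks a direct appeal to Theorem \ref{t Jordan + 2-local between unital}; I would circumvent this by working inside the multiplier algebra $M(A) \subseteq A^{**}$, which is automatically unital.

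First I would invoke Proposition \ref{t 2-local *-homo between C*-algebras} to obtain that $T$ is a Jordan homomorphism, and Lemma \ref{l local 2-homo are OP} to obtain that it preserves zero products. The weak$^*$-continuity argument recorded in Section 2 (Goldstine together with the separate weak$^*$-continuity of the product in a von Neumann algebra) then lifts the Jordan property to $T^{**} : A^{**} \to B^{**}$, so the restriction $\widetilde T := T^{**}|_{M(A)} : M(A) \to B^{**}$ is a bounded Jordan homomorphism on a unital C$^*$-algebra. Next I would apply Proposition \ref{p wong 2.2 refined} to propagate zero-products preservation from $A$ up to $M(A)$, giving that $\widetilde T$ sends zero products in $M(A)$ to zero products in $B^{**}$. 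The hypotheses of Theorem \ref{t Jordan + 2-local between unital} are then in force for $\widetilde T$ in place of $J$, so $\widetilde T$ is multiplicative. Restricting along the inclusion $A \hookrightarrow M(A)$ and using $T^{**}|_A = T$ produces $T(ab) = T(a)\, T(b)$ for all $a,b \in A$, establishing the first claim.

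For the $^*$-homomorphism assertion, I would first observe that a linear 2-local $^*$-homomorphism is automatically contractive, hence bounded: choosing a $^*$-homomorphism $\Phi_{a,a}$ with $\Phi_{a,a}(a) = T(a)$ forces $\|T(a)\| \leq \|a\|$. The already-proven multiplicative part then gives that $T$ is a homomorphism, while Proposition \ref{t 2-local *-homo between C*-algebras} simultaneously provides that $T$ is a Jordan $^*$-homomorphism, which in particular supplies $T(a^*) = T(a)^*$; combining the two yields a $^*$-homomorphism. Thus the only genuinely new step beyond the preparatory results is the passage to $M(A)\subseteq A^{**}$, with all the substantive computation encapsulated in Theorem \ref{t Jordan + 2-local between unital} and Proposition \ref{p wong 2.2 refined}.
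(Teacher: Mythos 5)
Your proposal is correct and follows essentially the same route as the paper: Jordan homomorphism via Proposition \ref{t 2-local *-homo between C*-algebras}, zero-products preservation via Lemma \ref{l local 2-homo are OP}, passage to the unital algebra $M(A)$ via Proposition \ref{p wong 2.2 refined}, and then Theorem \ref{t Jordan + 2-local between unital}. You are in fact slightly more explicit than the paper on two points it leaves implicit, namely that the Jordan property lifts to $T^{**}|_{M(A)}$ by the weak$^*$-continuity argument of Section 2, and that a linear 2-local $^*$-homomorphism is automatically contractive so the first assertion applies to it.
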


\begin{proof} Let $T: A\to B$ be a bounded 2-local homomorphism between C$^*$-algebras. Proposition \ref{t 2-local *-homo between C*-algebras} implies that $T$ is a Jordan homomorphism.\smallskip

By the 2-local property, we deduce, via Lemma \ref{l local 2-homo are OP}, that $T$ preserves zero products. Proposition \ref{p wong 2.2 refined} implies that $T^{**}|_{M(A)} : M(A) \to B^{**}$ preserves zero products.\smallskip

Finally, since  $T^{**}|_{M(A)} : M(A) \to B^{**}$ is a Jordan homomorphism which preserves zero products and $M(A)$ is unital, the above Theorem \ref{t Jordan + 2-local between unital} gives the desired statement.
\end{proof}

Clearly, Theorems 3.6 and 3.7 in \cite{HadLi04} are direct consequences of the above Theorem \ref{t 2-local $^*$-homomorphisms}.\smallskip

Accordingly to the notation in \cite{Pop}, given a C$^*$-algebra $A$ and a complex Hilbert space $H$, a bounded linear map $T : A \to  B(H)$ is called a \emph{2-local representation} of $A$ whenever it is a 2-local homomorphism. The next result generalizes \cite[Corollary 3.6]{Pop} to the general setting of C$^*$-algebras.

\begin{corollary}\label{c 2-local linear representations} Let $A$ be a C$^*$-algebra. Every 2-local representation of a $A$ is a representation.$\hfill\Box$
\end{corollary}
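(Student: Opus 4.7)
The plan is essentially to observe that Corollary \ref{c 2-local linear representations} is an immediate specialization of Theorem \ref{t 2-local $^*$-homomorphisms} to the case where the codomain is $B(H)$. By the definition borrowed from Pop, a 2-local representation of a C$^*$-algebra $A$ on a complex Hilbert space $H$ is a bounded linear map $T : A \to B(H)$ that is a 2-local homomorphism in the sense introduced in the Introduction. Since $B(H)$ is itself a C$^*$-algebra, $T$ fits exactly into the hypotheses of Theorem \ref{t 2-local $^*$-homomorphisms}.

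First, I would unpack the definitions: given $a, b \in A$, there exists a bounded homomorphism $\Phi_{a,b} : A \to B(H)$ with $\Phi_{a,b}(a) = T(a)$ and $\Phi_{a,b}(b) = T(b)$, where the existence of bounded homomorphisms between C$^*$-algebras into $B(H)$ is guaranteed, for example, by the zero map, while non-trivial ones come from representations. Thus $T$ is a bounded linear 2-local homomorphism between two C$^*$-algebras.

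Then I would invoke Theorem \ref{t 2-local $^*$-homomorphisms} to conclude that $T$ is a homomorphism $T : A \to B(H)$, which is precisely the standard notion of a representation of the C$^*$-algebra $A$ on the Hilbert space $H$. No additional technical step is needed; the entire content of the corollary is the substitution $B = B(H)$ in the general theorem. The only potential subtlety is whether one wants to emphasize that boundedness is part of the hypothesis in Pop's definition of 2-local representation, so that the statement really is a rewording of Theorem \ref{t 2-local $^*$-homomorphisms} in this context; since Pop indeed builds boundedness into the definition, there is no obstacle here.

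In short, no new argument is required: the corollary is a direct translation of the main theorem under the terminology of \cite{Pop}, and the proof consists of one sentence citing Theorem \ref{t 2-local $^*$-homomorphisms}.
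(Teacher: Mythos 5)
Your proposal is correct and matches the paper's intended argument: the corollary is stated with no proof precisely because it is the specialization $B=B(H)$ of Theorem \ref{t 2-local $^*$-homomorphisms}, using that Pop's definition of a 2-local representation is exactly a bounded linear 2-local homomorphism into the C$^*$-algebra $B(H)$.
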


\begin{remark}\label{remark Bresar}{\rm It should be noted here that Theorem \ref{t Jordan + 2-local between unital} can be derived from \cite[Theorem 4.1]{AlBreExVill}.  Indeed, in the just commented result the authors prove that for every unital C$^*$-algebra $A$, every Banach algebra $B$, and every bounded linear operator $T: A\to B$ preserving zero products, then $$T(1)T(xy) = T(x)T(y),$$ for all $x,y$ in $A$. Therefore, if $J :A \to B$ is a bounded Jordan homomorphism preserving zero products, by the first part of the argument in the proof of Theorem \ref{t Jordan + 2-local between unital}, $J(1)=e$ is an idempotent in $B$ and $J(a) =e J(a) e= eJ(a) = J(a) e,$ for every $a\in A$, and hence $J(x y) = J(1) J(xy) = J(x)J(y),$ for all $x,y$ in $A$. That is, Theorem \ref{t Jordan + 2-local between unital} holds when $B$ is a Banach algebra. Proposition \ref{p wong 2.2 refined} is needed for the non-unital version of Theorem \ref{t 2-local $^*$-homomorphisms}}
\end{remark}

\begin{problem}\label{problem linearity} Is every (not necessarily linear) 2-local ($^*$-)homomorphism between C$^*$-algebras a ($^*$-)homomorphism? Equivalently, determine whether the hypothesis concerning linearity in Theorem \ref{t 2-local $^*$-homomorphisms} is superfluous.
\end{problem}

As we have commented before, we cannot expect that a local homomorphism between C$^*$-algebras is a homomorphism (see \cite[\S 2]{Pop}). We shall take a closer look at the counter-example provided by F. Pop.

\begin{example}\label{example Pop revisited} We know, from \cite[\S 2]{Pop}, that the mapping $T : M_2 (\mathbb{C}) \to M_4 (\mathbb{C})$, $$ T\left( \left(
                                                                      \begin{array}{cc}
                                                                        a & b \\
                                                                        c & d \\
                                                                      \end{array}
                                                                    \right)
 \right)  = \left(
              \begin{array}{cccc}
                a & 0 & b & 0 \\
                0 & a & 0 & c \\
                c & 0 & d & 0 \\
                0 & b & 0 & d \\
              \end{array}
            \right),
 $$ is a local homomorphism which is not multiplicative. Is easy to check that the above $T$ is a unital Jordan $^*$-homomorphism. We claim that $T$ is not a local $^*$-homomorphism. Otherwise, there exits a $^*$-homomorphism $$\pi =\pi_{\tiny\left(
                                                                                                   \begin{array}{cc}
                                                                                                     1 & 1 \\
                                                                                                     2 & 0 \\
                                                                                                   \end{array}
                                                                                                 \right)
 }: M_2 (\mathbb{C}) \to  M_4 (\mathbb{C})$$ satisfying $$\pi \left(
                                                                                                   \begin{array}{cc}
                                                                                                     1 & 1 \\
                                                                                                     2 & 0 \\
                                                                                                   \end{array}
                                                                                                 \right) = T\left(
                                                                                                   \begin{array}{cc}
                                                                                                     1 & 1 \\
                                                                                                     2 & 0 \\
                                                                                                   \end{array}
                                                                                                 \right) = \left(
              \begin{array}{cccc}
                1 & 0 & 1 & 0 \\
                0 & 1 & 0 & 2 \\
                2 & 0 & 0 & 0 \\
                0 & 1 & 0 & 0 \\
              \end{array}
            \right).$$ Therefore $$\pi \left(
                                                                                                   \begin{array}{cc}
                                                                                                     2 & 2 \\
                                                                                                     2 & 4 \\
                                                                                                   \end{array}
                                                                                                 \right) = \pi \left( \left(
                                                                                                   \begin{array}{cc}
                                                                                                     1 & 1 \\
                                                                                                     2 & 0 \\
                                                                                                   \end{array}
                                                                                                 \right) \left(
                                                                                                   \begin{array}{cc}
                                                                                                     1 & 1 \\
                                                                                                     2 & 0 \\
                                                                                                   \end{array}
                                                                                                 \right)^* \right)$$ $$ = \pi \left(
                                                                                                   \begin{array}{cc}
                                                                                                     1 & 1 \\
                                                                                                     2 & 0 \\
                                                                                                   \end{array}
                                                                                                 \right) \pi \left(
                                                                                                   \begin{array}{cc}
                                                                                                     1 & 1 \\
                                                                                                     2 & 0 \\
                                                                                                   \end{array}
                                                                                                 \right)^* =  \left(
              \begin{array}{cccc}
                2 & 0 & 2 & 0 \\
                0 & 5 & 0 & 1 \\
                2 & 0 & 4 & 0 \\
                0 & 1 & 0 & 1 \\
              \end{array}
            \right), $$
            $$\pi \left(
                                                                                                   \begin{array}{cc}
                                                                                                     5 & 1 \\
                                                                                                     1 & 1 \\
                                                                                                   \end{array}
                                                                                                 \right) = \pi \left( \left(
                                                                                                   \begin{array}{cc}
                                                                                                     1 & 1 \\
                                                                                                     2 & 0 \\
                                                                                                   \end{array}
                                                                                                 \right)^* \left(
                                                                                                   \begin{array}{cc}
                                                                                                     1 & 1 \\
                                                                                                     2 & 0 \\
                                                                                                   \end{array}
                                                                                                 \right) \right) =   \left(
              \begin{array}{cccc}
                5 & 0 & 1 & 0 \\
                0 & 2 & 0 & 2 \\
                1 & 0 & 1 & 0 \\
                0 & 2 & 0 & 4 \\
              \end{array}
            \right), $$ $$\pi \left(
                                                                                                   \begin{array}{cc}
                                                                                                     0 & 0 \\
                                                                                                     -2 & 4 \\
                                                                                                   \end{array}
                                                                                                 \right) = \left(
              \begin{array}{cccc}
                0 & 0 & 0 & 0 \\
                0 & 3 & 0 & -3 \\
                -2 & 0 & 4 & 0 \\
                0 & -1 & 0 & 1 \\
              \end{array}
            \right),$$ $$ \pi \left(
                                                                                                   \begin{array}{cc}
                                                                                                     -8 & 0 \\
                                                                                                     0 & 2 \\
                                                                                                   \end{array}
                                                                                                 \right) =\pi \left(\left(
                                                                                                   \begin{array}{cc}
                                                                                                     2 & 2 \\
                                                                                                     2 & 4 \\
                                                                                                   \end{array}
                                                                                                 \right) -2 \left(
                                                                                                   \begin{array}{cc}
                                                                                                     5 & 1 \\
                                                                                                     1 & 1 \\
                                                                                                   \end{array}
                                                                                                 \right) \right)= \left(
              \begin{array}{cccc}
                -8 & 0 & 0 & 0 \\
                0 & 1 & 0 & -3 \\
                0 & 0 & 2 & 0 \\
                0 & -3 & 0 & -7 \\
              \end{array}
            \right), $$ $$\pi \left(
                                                                                                   \begin{array}{cc}
                                                                                                     0 & 0 \\
                                                                                                     0 & 20 \\
                                                                                                   \end{array}
                                                                                                 \right) = \pi \left( \left(
                                                                                                   \begin{array}{cc}
                                                                                                     0 & 0 \\
                                                                                                     -2 & 4 \\
                                                                                                   \end{array}
                                                                                                 \right) \left(
                                                                                                   \begin{array}{cc}
                                                                                                     0 & 0 \\
                                                                                                     -2 & 4 \\
                                                                                                   \end{array}
                                                                                                 \right)^*\right)= 20 \left(
              \begin{array}{cccc}
                0 & 0 & 0 & 0 \\
                0 & 9/10 & 0 & -3/10 \\
                0 & 0 & 1 & 0 \\
                0 & -3/10 & 0 & 1/10 \\
              \end{array}
            \right),$$ $$\pi \left(
                                                                                                   \begin{array}{cc}
                                                                                                     1 & 0 \\
                                                                                                     0 & 0 \\
                                                                                                   \end{array}
                                                                                                 \right) = \left(
              \begin{array}{cccc}
                1 & 0 & 0 & 0 \\
                0 & 1/10 & 0 & 3/10 \\
                0 & 0 & 0 & 0 \\
                0 & 3/10 & 0 & 9/10 \\
              \end{array}
            \right),$$ $$ \pi \left(
                                                                                                   \begin{array}{cc}
                                                                                                     0 & 0 \\
                                                                                                     1 & 0 \\
                                                                                                   \end{array}
                                                                                                 \right) = \left(
              \begin{array}{cccc}
                0 & 0 & 0 & 0 \\
                0 & 3/10 & 0 & 9/10 \\
                1 & 0 & 0 & 0 \\
                0 & -1/10 & 0 & -3/10 \\
              \end{array}
            \right),$$ and $$\pi \left(
                                                                                                   \begin{array}{cc}
                                                                                                     0 & 1 \\
                                                                                                     0 & 0 \\
                                                                                                   \end{array}
                                                                                                 \right) = \left(
              \begin{array}{cccc}
                0 & 0 & 1 & 0 \\
                0 & 3/10 & 0 & -1/10 \\
                0 & 0 & 0 & 0 \\
                0 & 9/10 & 0 & -3/10 \\
              \end{array}
            \right),$$ which gives $\pi \left( \left(
                                                                                                   \begin{array}{cc}
                                                                                                     1 & 1 \\
                                                                                                     0 & 1 \\
                                                                                                   \end{array}
                                                                                                 \right) \left(
                                                                                                   \begin{array}{cc}
                                                                                                     1 & 1 \\
                                                                                                     0 & 0 \\
                                                                                                   \end{array}
                                                                                                 \right) \right) \neq \pi \left( \left(
                                                                                                   \begin{array}{cc}
                                                                                                     1 & 1 \\
                                                                                                     0 & 1 \\
                                                                                                   \end{array}
                                                                                                 \right) \right) \pi\left( \left(
                                                                                                   \begin{array}{cc}
                                                                                                     1 & 1 \\
                                                                                                     0 & 0 \\
                                                                                                   \end{array}
                                                                                                 \right) \right),$ contradicting that $\pi$ is a homomorphism.\smallskip

Furthermore, the elements $\displaystyle a= \left(
                                                                                                   \begin{array}{cc}
                                                                                                     1 & -1 \\
                                                                                                     1 & -1 \\
                                                                                                   \end{array}
                                                                                                 \right)$ and $\displaystyle b= \left(
                                                                                                   \begin{array}{cc}
                                                                                                     1 & 2 \\
                                                                                                     1 & 2 \\
                                                                                                   \end{array}
                                                                                                 \right)$ satisfy $a b=0$ and $T(a) T(b)\neq 0,$ which shows that $T$ does not preserves zero products. \end{example}

It seems natural to ask whether every local $^*$-homomorphism between C$^*$-algebras is multiplicative. We shall see that the answer to this question is, in general, negative. The next example illustrates this fact and provides an easier argument to Pop's counterexample.

\begin{example}\label{example unitarily equivalent matrices} A problem posed by P.R. Halmos in \cite[Proposition 159]{Hal54} asks  whether every square complex matrix is unitarily equivalent to its transpose. In other words, given $a\in M_{n} (\mathbb{C})$, when does there exist a unitary matrix $u\in M_n (\mathbb{C})$ satisfying  $u^* a u = a^{t}$?\smallskip

More generally, the problem of deciding whether two given square matrices $a$ and $b$ over the field of complex numbers are unitarily equivalent was positively solved by W. Specht \cite{Specht} who found a (more or less satisfactory) necessary and sufficient condition for two complex square matrices to be unitarily equivalent. In the setting of $2\times 2$ matrices the conditions are much more simple; F.D. Murnaghan \cite{Mur54} showed that, the traces of $a$, $a^2$, and $a^*a$ form a complete set of invariants to determine when two matrices in $M_2 (\mathbb{C})$ are unitarily equivalent {\rm(}i.e. two matrices $a,b\in M_2 (\mathbb{C})$ are unitarily equivalent if and only if $\hbox{tr}(a) = \hbox{tr}(b)$, $\hbox{tr}(a^2)=\hbox{tr}(b^2)$, and $\hbox{tr}(a^*a)= \hbox{tr}(b^*b)${\rm)}. Some years later, C. Pearcy \cite{Pearcy1962} obtained a list of nine conditions to determine when $a,b \in M_3(\mathbb{C})$ are unitarily equivalent {\rm(}see \cite{GarTener2012} for a recent publication on these topics{\rm )}.\smallskip

Murnaghan's characterization implies that every matrix in $M_2 (\mathbb{C})$ is unitarily equivalent to its transpose, that is for each $a\in M_2 (\mathbb{C})$ there exists a unitary matrix $u\in M_2 (\mathbb{C})$ {\rm(}depending on $a${\rm)} satisfying $u^* a u = a^t$. Consequently, the mapping $$T:M_2 (\mathbb{C}) \to M_2 (\mathbb{C}), \ \ T(a) = a^t,$$ is a linear local $^*$-homomorphism and a $^*$-anti-homomorphism which is not multiplicative.\end{example}

\textbf{Acknowledgments:} I would like to thank my friend and colleague Professor Timur Oikhberg for bringing reference \cite{Pop} to my attention, and for all the fruitful comments and suggestions provided by him. I would also like to thank the anonymous referee for the careful review and the helpful suggestions.

\end{document}